\def\R{{\mathbb R}}
\def\R{{\mathbb R}}
\def\z+{{\mathbb Z}_+}
\def\bint{{\ifinner\rlap{\bf\kern.30em--}
\int\else\rlap{\bf\kern.35em--}\int\fi}\ignorespaces}
\def\sbint{{\ifinner\rlap{\bf\kern.32em--}
\hspace{0.078cm}\int\else\rlap{\bf\kern.45em--}\int\fi}\ignorespaces}
\newtheorem{theorem}{Theorem}[section]
\newtheorem{lemma}[theorem]{Lemma}
\newtheorem{proposition}[theorem]{Proposition}
\theoremstyle{definition}
\newtheorem{example}[theorem]{Example}
\newtheorem{definition}[theorem]{Definition}
\numberwithin{equation}{section}
\numberwithin{equation}{section}
\numberwithin{equation}{section}
\begin{document}

\arraycolsep=1pt

\title{\Large\bf Approximation via partial Hausdorff integrals on $H^1(\R)$ \footnotetext{\hspace{-0.35cm} {\it 2020
Mathematics Subject Classification}. {41A25, 42A38.}
\endgraf{\it Key words and phrases.} Hausdorﬀ operators, approximation, Hardy space.
\endgraf This project is supported by Natural Science Foundation of Xinjiang Province (No.2024D01C40) and National Natural Science Foundation of China (No.12261083).
\endgraf $^\ast$\,Corresponding author.
}}
\author{Zifei Yu and Baode Li$^\ast$}
\date{ }
\maketitle

\vspace{-0.8cm}

\begin{center}
\begin{minipage}{13cm}\small
{\noindent{\bf Abstract.}
We obtain the result of approximating \( f \) in the \( H^1(\mathbb{R}) \) norm using partial Hausdorff integrals. Specifically, by leveraging the homogeneous multiplier theory of \( H^1(\mathbb{R}) \) and the \( K \) functional theory, one result from Pinos and Liflyand [CMB,~2021,~64,~no.3] is extended from \( L^p(\mathbb{R}) \) ( \( 1 \leq p \leq \infty \)) to \( H^1(\mathbb{R}) \). As applications, four examples of partial Hausdorff integrals are also given.} 
\end{minipage}
\end{center}

\section{Introduction\label{s1}}

Let $\varphi$ be a measurable real-valued function on $\R$. Assume that $a:\R\to\R$, $a$ is measurable, $a\neq0$ a.e., and for every set $E$ of zero Lebesgue measure, the set $a^{-1}(E)$ also has zero measure. Under these assumptions, Burenkov and Liflyand \cite[Theorem 1]{Lif2019} proved that the function \((x,t) \mapsto f(a(t)x)\) is measurable on $\R^2$. Let $\varphi|a|^\frac{1}{2}\in L^1(\R)$. A general {\it Hausdorff operator} $H$ of $f\in L^2(\R)$ is defined by
\begin{equation}\label{eq23}
H(f)(x):=H_{\varphi,a}(f)(x)=\int_{\R} \varphi(t)|a(t)|f(a(t)x)dt,~x\in \R.
\end{equation}
By Minkowski's inequality and substituting $a(t)x=\tilde{x}$, it is easy to verify that $H$ is bounded on $L^2(\R)$:
\[
\|Hf\|_{L^2}
\leq \int_{\R} |\varphi(t)||a(t)| \left(\int_{\R}|f(a(t)x)|^2 dx\right)^\frac{1}{2}dt=\int_{\R}|\varphi(t)||a(t)|^\frac{1}{2}dt\|f\|_{L^2}.
\]

Taking $a(t)=1/t$ when $t\neq0$ and $a(0)=0$, then $H$ is a {\it one-dimensional Hausdorff operator} $H_\varphi$, i.e.,
\[
H_{\varphi}(f)(x):=\int_{\R} \frac{\varphi(t)}{|t|} f\left(\frac{x}{t}\right) dt.
\]

Suppose $a$ is additionally odd and such that $|a|$ is decreasing, positive and bijective on $(0,+\infty)$. Pinos and Liflyand \cite{Liflyand2021} defined 
\begin{align*}
({H_N\widehat{f}})\widecheck{~}(x)
&:=
\frac{1}{2\pi}\int_{-N}^{N} H\widehat{f}(u)e^{iux}du
\\
&=\frac{1}{2\pi}\int_{-N}^{N} \int_{\R}\varphi(t)|a(t)|\int_{\R}f(s)e^{-ia(t)su}dsdte^{iux}du
\\
&=\frac{1}{\pi}\int_{\R}\varphi(t)|a(t)|\int_{\R}f(s)\frac{\sin N(x-a(t)s)}{x-a(t)s}ds dt, \quad N>0.
\end{align*}
By substituting $\tilde{s}=N(\frac{x}{a(t)}-s)$,
\begin{equation}\label{eq25}
(H_N\widehat{f})\widecheck{~}(x)=\frac{1}{\pi}\int_{\R}\varphi(t)\int_{\R}f\left(\frac{x}{a(t)}-\frac{s}{N}\right)\frac{\sin|a(t)|s}{s}dsdt.
\end{equation}
Suppose that $\varphi\in L^1(\R)$ and $\int_{\R}\varphi(t)dt=1$. To approximate \( f \), Pinos and Liflyand \cite{Liflyand2021} defined the {\it  partial Hausdorff integrals} by
\begin{align}
F_N(x):&=\left(H_N\widehat{f(x+\cdot)}\right)\widecheck{~}(0)=\left(H_N\widehat{\tau_xf}\right)\widecheck{~}(0)\nonumber\\
&=\frac{1}{\pi}\int_{\R}\varphi(t)\int_{\R}f\left(x-\frac{s}{N}\right)\frac{\sin(|a(t)|s)}{s} {d}s dt.\label{eq24}
\end{align}
In \cite{Liflyand2021}, Pinos and Liflyand obtained the approximation of \( f \) by \( F_N \) in the \( L^p(\R) \)-norm using the {\it \( L^p(\R) \)-modulus of continuity} when $1\leq p\leq\infty$, $\varphi\max\{|a|^\frac{1}{p},|a|^\frac{1}{2}\}\in L^1(\R^n)$ and $f\in L^1\cap L^p(\R^n)$.

In our proof, for convenience, we set \( 1/N = \epsilon \) in $(\ref{eq24})$ and denote
\begin{equation}\label{eq11}
F_\epsilon(x)
:=
\frac{1}{\pi}\int_{\R}\varphi(t)\int_{\R}f(x-\epsilon s)\frac{\sin(|a(t)|s)}{s} ds dt. 
\end{equation}
We obtained the approximation of \( f \) by \( F_\epsilon \) in the \( H^1(\R) \)-norm using the {\it K functional} from \cite[Chapter 4]{Lu1995}.

The structure of this paper is as follows: In Section \ref{s2}, we introduce the necessary definitions and notations; in Section \ref{s3}, we first prove that the partial Hausdorff integrals are uniformly bounded on \( H^1(\R) \), and then we show that \( f \) can be approximated by the partial Hausdorff integrals under the \( H^1(\R) \)-norm, along with several necessary lemmas; and in Section \ref{s4}, we provide four examples.

The proof of  \cite[Theorem 2.2]{Lif2019} uses the modulus of continuity of \( L^p(\mathbb{R}^n) \) (\( 1 \leq p \leq \infty \)) to show that partial Hausdorff integrals can approximate \( f \in L^p(\mathbb{R}^n) \) under the \( L^p(\mathbb{R}^n) \)-norm. Different from this approach, we utilize the $K$ functional to prove that, under the same conditions, partial Hausdorff integrals can approximate \( f \in H^1(\mathbb{R}) \) under the \( H^1(\mathbb{R}) \)-norm.




\section{Notations and definitions \label{s2}}

\begin{definition}\label{def2.2}\cite{Loukas2024}
For \(1\leq p<\infty\), we define the \(L^p(\R)\) norm of a measurable function \(f\) by
\[
\|f\|_{L^p(\R)}:=\left(\int_{\R}|f(x)|^p d x\right)^{\frac{1}{p}}.
\]
For any \(1\leq p< \infty\), we define \(L^p(\R)\) to be the space of all measurable functions \(f\) with \(\|f\|_{L^p(\R)}<\infty\). 

\end{definition}

\begin{definition}\cite{Loukas2024}
Let \( f \in L^2(\mathbb{R}) \). The {\it Fourier transform} \( \widehat{f} \) of \( f \) is defined by
\[
\widehat{f}(x)=\int_{\R} f(x) e^{-ixy} dy\overset{L^2(\R)}{:=}\lim_{R\to+\infty}\int_{|x|\leq R} f(x)e^{-ixy}dy.
\]
Analogously define the {\it inverse Fourier transform} \( f^\vee \) of $f$ by
\[
f^\vee(x)=\frac{1}{2\pi}\int_{\R}f(x)e^{ixy}dx\overset{L^2(\R)}{:=}\lim_{R\to+\infty}\frac{1}{2\pi}\int_{|x|\leq R} f(x)e^{ixy}dx.
\]
\end{definition}

\begin{definition}
Suppose \(\Phi\in \mathscr{S}\left(\R\right)\) with $\int_{\R} \Phi(x) d x \neq 0 .$
For any tempered distributions $f\in\mathscr{S}'(\R)$, the {\it radial maximal operator} $M^+_{\Phi}$ of $f$ is defined by
\[
M_{\Phi}^+f(x):=\sup _{0<s<\infty}\left|\Phi_s * f(x)\right|,\quad x\in\R,
\]
where \(\Phi_s(x):=\frac{1}{s} \Phi\left(\frac{x}{s}\right)\).
The {\it Hardy space} \(H^1(\R)\) is the space of all tempered distributions \(f\) satisfying
\[
\|f\|_{H^1}:=\left\|M_{\Phi}^+f\right\|_{L^1(\R)}<\infty .
\]
\end{definition}

\begin{definition}\label{def1}\cite[Page 174]{Lu1995}
Suppose $f\in H^1(\R)$ and $\sigma>0$. The {\it $\sigma$-th order Riesz derivative} $I^\sigma f$ is defined by
\begin{equation*}\label{eq1}
\widehat{I^\sigma f}(x)=|x|^\sigma \widehat{f}(x).
\end{equation*}
Then we define some subspaces $H^{1,\sigma}(\R)$ of $H^1(\R)$ as follows
\begin{equation*}\label{eq2}
H^{1,\sigma}:=\{f\in H^1: I^\sigma f\in H^1\}.
\end{equation*}
\end{definition}

\begin{definition}\label{def2}\cite[Page 174]{Lu1995}
Suppose that $\sigma>0$, $t>0$ and $f\in H^1(\R)$. {\it The $\sigma$-th order $K$ functional of $f$} is defined by
\begin{equation*}
K_\sigma(f,t)_{H^1}:=\inf_{g\in H^{1,\sigma}}\{\|f-g\|_{H^1}+t^\sigma\|I^\sigma g\|_{H^1}\}.
\end{equation*}
\end{definition}

\begin{proposition}\label{pro1}\cite[Page 174]{Lu1995}
Suppose that $\sigma>0$ and $t>0$. If $f\in H^1(\R)$, then
\begin{equation*}\label{eq4}
\lim_{t\to0^+}K_{\sigma}(f,t)_{H^1}=0.
\end{equation*}
\end{proposition}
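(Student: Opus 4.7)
The plan is to reduce the assertion to a density statement: once we know that the subspace $H^{1,\sigma}$ is dense in $H^1(\R)$, the conclusion follows immediately from the definition of the $K$ functional. Indeed, for any $\delta>0$ pick $g\in H^{1,\sigma}$ with $\|f-g\|_{H^1}<\delta$; then by the infimum definition
\begin{equation*}
K_\sigma(f,t)_{H^1}\leq \|f-g\|_{H^1}+t^\sigma\|I^\sigma g\|_{H^1}<\delta+t^\sigma\|I^\sigma g\|_{H^1},
\end{equation*}
so keeping $g$ fixed and letting $t\to 0^+$ yields $\limsup_{t\to 0^+}K_\sigma(f,t)_{H^1}\leq\delta$; the arbitrariness of $\delta$ finishes the argument.

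Accordingly, the main task is to establish density of $H^{1,\sigma}$ in $H^1(\R)$, and I would do this via a Littlewood--Paley frequency truncation. Fix $\eta\in\mathscr{S}(\R)$ with $\widehat{\eta}$ supported in $\{1/2\leq|\xi|\leq 2\}$ and $\sum_{j\in\Z}\widehat{\eta}(2^{-j}\xi)=1$ for $\xi\neq 0$; set $\eta_j(x):=2^j\eta(2^jx)$ so that $\widehat{\eta_j}(\xi)=\widehat{\eta}(2^{-j}\xi)$, and define
\begin{equation*}
g_N:=\sum_{|j|\leq N}\eta_j*f.
\end{equation*}
For each fixed $N$ the Fourier support of $\widehat{g_N}$ lies in a compact annulus bounded away from the origin, on which $|\xi|^\sigma$ coincides with some Schwartz symbol; the associated multiplier operator then maps $H^1(\R)$ to itself by a standard Mikhlin-type theorem, so $I^\sigma g_N\in H^1(\R)$ and hence $g_N\in H^{1,\sigma}$.

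The remaining, and principal, difficulty is to show $g_N\to f$ in the $H^1$-norm as $N\to\infty$. I would invoke the Littlewood--Paley square-function characterization of $H^1(\R)$: the square function of $f-g_N$ equals $\bigl(\sum_{|j|>N}|\eta_j*f|^2\bigr)^{1/2}$, which tends to $0$ in $L^1(\R)$ by dominated convergence, using the fact that the full square function of $f$ is $L^1$-equivalent to $\|f\|_{H^1}$. Should that square-function route become technically delicate, an alternative is to start from the atomic decomposition of $H^1$ and approximate each atom individually by mollification combined with a frequency cutoff away from zero. Either way, combining density with the elementary $K$-functional estimate above establishes Proposition~\ref{pro1}.
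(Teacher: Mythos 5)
Your argument is correct. Note, however, that the paper does not prove this proposition at all: it is quoted verbatim from Lu's book \cite[Page 174]{Lu1995}, so there is no in-paper proof to compare against. Your reduction is the standard one and almost certainly the intended one: the elementary estimate $K_\sigma(f,t)_{H^1}\leq\|f-g\|_{H^1}+t^\sigma\|I^\sigma g\|_{H^1}$ turns the claim into the density of $H^{1,\sigma}$ in $H^1(\R)$, and your Littlewood--Paley truncation $g_N=\sum_{|j|\leq N}\eta_j*f$ is a perfectly good way to get that density (each $g_N$ has Fourier support in a compact annulus away from the origin, where $|\xi|^\sigma$ agrees with a $C_c^\infty$ symbol, so $I^\sigma g_N\in H^1$). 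One small imprecision worth fixing: the square function of $f-g_N$ does not literally \emph{equal} $\bigl(\sum_{|j|>N}|\eta_j*f|^2\bigr)^{1/2}$; because $\eta_k*\eta_j\neq 0$ only for $|k-j|\leq 1$, it is \emph{controlled by} a slightly fattened tail $\bigl(\sum_{|j|\geq N-1}|\widetilde{\eta}_j*f|^2\bigr)^{1/2}$, which still tends to $0$ in $L^1$ by dominated convergence against the full square function. With that adjustment the proof is complete and self-contained, which is arguably more than the paper provides.
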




\begin{definition}\label{def3}\cite[Page 176]{Lu1995}
Suppose that $m\in L^\infty(\R)$. If a family of operators $\{M_\epsilon\}_{\epsilon>0}$ defined by the equality
\begin{equation}\label{eq4}
\widehat{M_{\epsilon} f} (x)=m(\epsilon x)\widehat{f}(x),\quad f\in H^1 \cap L^2(\R)
\end{equation}
can be extended into a family of bounded operators on $H^1(\R)$, and their norms are uniformly bounded in $\epsilon$, the $m(x)$ is called a {\it homogeneous $H^1$ multiplier}.
\end{definition}

\section{Main results \label{s3}}

In this section, we first prove that the partial Hausdorff integrals $\{F_\epsilon\}_{\epsilon>0}$ are uniformly bounded on $H^1(\R)$, then we further obtain the approximation via partial Hausdorff integrals on $H^1(\R)$.

\begin{theorem}\label{th2}
Suppose that $a$ is a measurable function and $\varphi\in L^1(\R)$ satisfies $\int_{\R}\varphi(t)dt=1$ and
\begin{equation}\label{eq1103}
\varphi|a|^\frac{1}{2}\in L^1(\R).
\end{equation}
Then for any $\epsilon>0$ and $f\in H^1(\R)$,
\[
\|F_\epsilon\|_{H^1}\leq C\|f\|_{H^1},
\]
where $C$ is independent of $\epsilon$.
\end{theorem}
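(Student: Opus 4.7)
\medskip\noindent\textbf{Proof plan.} The strategy is to realize $F_\epsilon$ as a Fourier multiplier operator and then invoke the theory of homogeneous $H^1$ multipliers (Definition~\ref{def3}). For $f\in H^1(\R)\cap L^2(\R)$, the classical identity $\int_{\R}\frac{\sin(Ns)}{s}e^{-is\xi}\,ds = \pi\,\mathbf{1}_{|\xi|\leq N}$ (for $N>0$) together with Fubini's theorem applied to (\ref{eq11}) gives
\[
\widehat{F_\epsilon}(\xi) = m(\epsilon\xi)\widehat{f}(\xi), \qquad m(\xi) := \int_{\R}\varphi(t)\mathbf{1}_{|\xi|\leq|a(t)|}\,dt,
\]
so that $F_\epsilon = K_\epsilon*f$ with $K_\epsilon(x)=\epsilon^{-1}K(x/\epsilon)$ and $K(x)=\frac{1}{\pi x}\int_{\R}\varphi(t)\sin(|a(t)|x)\,dt$. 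By Definition~\ref{def3}, it therefore suffices to show that $m$ is a homogeneous $H^1$ multiplier with constant controlled by $\|\varphi\|_{L^1}$ and $\|\varphi|a|^{1/2}\|_{L^1}$; a standard density argument ($H^1\cap L^2$ dense in $H^1$) then upgrades the bound to all of $H^1$.

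The hypothesis (\ref{eq1103}) enters via an $L^2$-control of the kernel. Using $\min(u,v)\leq\sqrt{uv}$, a Plancherel/Fubini computation gives
\[
\|m\|_{L^2}^2 = 2\int_{\R^2}\varphi(s)\varphi(t)\min(|a(s)|,|a(t)|)\,ds\,dt \leq 2\|\varphi|a|^{1/2}\|_{L^1}^2,
\]
placing $K$ in $L^2$ with norm controlled by (\ref{eq1103}); combined with the trivial bound $\|m\|_{L^\infty}\leq\|\varphi\|_{L^1}$, this already yields the $L^2$-boundedness of $F_\epsilon$ uniformly in $\epsilon$. Promoting this to an $H^1$-multiplier statement is then achieved by verifying the pointwise/kernel conditions required by the $H^1$-multiplier theory developed in Lu~\cite[Chapter~4]{Lu1995}, exploiting the explicit structure of $K$ as an oscillatory average over $\varphi(t)\,dt$.

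The main obstacle is that, for each fixed $t$, the partial Fourier integral $S_{|a(t)|}$ (Fourier symbol $\mathbf{1}_{|\xi|\leq|a(t)|}$) is \emph{not} bounded on $H^1$: applied to, say, the atom $\mathbf{1}_{[0,1]}-\mathbf{1}_{[-1,0]}$, $S_1$ produces boundary-term decay of order $O(1/|x|)$ at infinity, which is not in $L^1$. Hence the $H^1$-boundedness of $F_\epsilon$ cannot come from any pointwise-in-$t$ argument but arises solely from the averaging against $\varphi(t)\,dt$, which smooths out the jump discontinuities of $\mathbf{1}_{|\xi|\leq|a(t)|}$ at $\xi=\pm|a(t)|$. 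The assumption (\ref{eq1103}) quantifies exactly how much averaging is needed, and making this smoothing rigorous is the technical crux of the theorem.
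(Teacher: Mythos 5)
You set the problem up exactly as the paper does: $F_\epsilon=K_\epsilon*f$, symbol $\widehat{K_\epsilon}(\xi)=m(\epsilon\xi)$ with $m(\xi)=\int_{\R}\varphi(t)\,\chi_{\{|\xi|\le |a(t)|\}}\,dt$, the trivial bound $\|m\|_{L^\infty}\le\|\varphi\|_{L^1}$, and an $L^2$ control of the kernel coming from \eqref{eq1103} (your Plancherel computation $\|m\|_{L^2}^2\le 2\|\varphi|a|^{1/2}\|_{L^1}^2$ is in fact a cleaner route to the paper's estimate $\|K_\epsilon\|_{L^2}<\infty$). But the proposal stops where the theorem begins: the uniform $H^1$ bound is precisely the step you defer to ``verifying the pointwise/kernel conditions required by the $H^1$-multiplier theory,'' and no such condition is ever verified. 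Worse, your own closing paragraph explains why the obvious verification must fail --- for each fixed $t$ the kernel is a Dirichlet-type kernel $\frac{\sin(|a(t)|\epsilon^{-1}x)}{\pi x}$, whose convolution operator is unbounded on $H^1(\R)$ (applied to an atom it leaves a non-integrable $\cos(Nx)/x$ tail) --- yet you supply no mechanism by which averaging against $\varphi(t)\,dt$ restores the needed regularity of $m$ across the jump set $\{|\xi|=|a(t)|\}$. As written, this is an outline with the decisive estimate missing, so it cannot be accepted as a proof.

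That said, the obstacle you identify is genuine, and it is instructive to note that the paper's own proof does not overcome it either. The paper closes the argument by invoking Lemma \ref{lem2} (bounded Fourier transform plus a H\"ormander-type condition) and ``checks'' the condition in \eqref{eq17} using only $|\sin u|\le 1$ --- exactly the pointwise-in-$t$ estimate you argue cannot succeed, since it would apply verbatim to a single Dirichlet kernel. Moreover, the integral in \eqref{eq7} and \eqref{eq17} is taken in $y$ over $\{|y|\le |x|/2\}$ for fixed $x$, whereas the criterion of \cite[p.~114]{Stein1993} requires integration in $x$ over $\{|x|\ge 2|y|\}$ for fixed $y\ne 0$; with the correct variable of integration the bound $\frac{1}{|x-y|}+\frac{1}{|x|}$ yields a divergent integral. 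Indeed, under the hypotheses of Theorem \ref{th2} alone (take $a\equiv 1$, so that \eqref{eq1103} holds and $m=\chi_{(-1,1)}$, i.e.\ $F_\epsilon=S_{1/\epsilon}f$) the conclusion fails, so any valid argument must use cancellation in $t$ --- e.g.\ quantitative decay of $u\mapsto\int_{\R}\varphi(t)\sin(|a(t)|u)\,dt$, or a Dini/H\"ormander condition on $m'$ as in Lemma \ref{lem1} --- which neither your proposal nor the paper makes precise at this point.
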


To prove Theorem \ref{th2}, we need two lemmas as follows.

\begin{lemma}\label{lem2}\cite[Page 114]{Stein1993}
Suppose \( g \) is a locally integrable function away from the origin on $\R$, and $|\widehat{g}(x)|\leq A_1$, $x\in\R$. Let
\begin{equation}\label{eq6}
(Tf)(x):=f*g(x), ~f\in L^2(\R).
\end{equation}
If 
\begin{equation}\label{eq7}
\int_{|x|\geq2|y|}|g(x-y)-g(x)|dy\leq A_2,~\text{whenever}~y\neq 0,
\end{equation}
then $T$ is bounded on $H^1(\R)$, that is,
\begin{equation}\label{eq8}
\|Tf\|_{H^1}\leq C(A_1,A_2) \|f\|_{H^1},~f\in H^1(\R).
\end{equation}
\end{lemma}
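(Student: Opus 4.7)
The plan is to prove this via the atomic decomposition of $H^1(\R)$: recall that every $f \in H^1(\R)$ admits a representation $f = \sum_j \lambda_j a_j$ with $\sum_j |\lambda_j| \leq C \|f\|_{H^1}$, where each $a_j$ is an $H^1$-atom, i.e., $\supp a_j \subset B_j = B(x_j, r_j)$, $\|a_j\|_\infty \leq |B_j|^{-1}$, and $\int a_j = 0$. By linearity and the completeness of $H^1$, it suffices to establish $\|Ta\|_{H^1} \leq C(A_1, A_2)$ uniformly for every such atom $a$. Fix one such $a$ with supporting ball $B = B(x_0, r)$.

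The next step is a standard near/far decomposition $Ta = \chi_{2B} \cdot Ta + \chi_{(2B)^c} \cdot Ta$. For the local piece on $2B$, I would apply the Cauchy--Schwarz inequality together with Plancherel's identity and the hypothesis $|\widehat g| \leq A_1$:
\[
\int_{2B}|Ta(x)|\,dx \leq |2B|^{1/2} \|a \ast g\|_{L^2} \leq |2B|^{1/2} A_1 \|a\|_{L^2} \leq C A_1,
\]
using $\|a\|_{L^2} \leq |B|^{-1/2}$. For the far piece, I would exploit the vanishing moment $\int a = 0$ to write $Ta(x) = \int [g(x-y) - g(x-x_0)]\,a(y)\,dy$ and then apply Fubini. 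After the substitution $u = x - x_0$, $v = y - x_0$, any $x \in (2B)^c$ and $y \in B$ satisfy $|u| \geq 2r \geq 2|v|$, so the Hörmander condition $(\ref{eq7})$ applies and yields $\int_{(2B)^c}|Ta(x)|\,dx \leq A_2 \|a\|_{L^1} \leq A_2$. Summing gives $\|Ta\|_{L^1} \leq C(A_1 + A_2)$.

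To upgrade this $L^1$ control to the required $H^1$ control, I would verify that $Ta$ is in fact an $H^1$-molecule centered at $x_0$. The cancellation $\int Ta = 0$ follows from $\widehat{Ta}(0) = \widehat g(0)\widehat a(0)=0$; the size/decay conditions $\|Ta\|_{L^2} \leq A_1 |B|^{-1/2}$ (Plancherel) and a weighted $L^2$ estimate of the form $\int |Ta(x)|^2 |x-x_0|^{1+\delta}dx < \infty$ can be extracted by redoing the far-part estimate on dyadic annuli $2^{k+1}B \setminus 2^k B$ and summing the geometric series that arises from the Hörmander condition (a quantitative refinement of $(\ref{eq7})$ over each annulus). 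The molecular characterization of $H^1(\R)$ (Taibleson--Weiss/Coifman) then produces $\|Ta\|_{H^1} \leq C(A_1, A_2)$.

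The main obstacle is precisely the last step: the $L^1$ bound on $Ta$ is a direct consequence of the two hypotheses, but passing from $L^1$ to $H^1$ for the image requires identifying $Ta$ as a molecule, which in turn requires a quantitative, annular-scale refinement of the Hörmander bound rather than just the single inequality $(\ref{eq7})$. Once the molecular size and cancellation conditions are verified, the conclusion is automatic from standard $H^1$ theory. This is exactly the argument presented in Stein's monograph, which is why the lemma is invoked here by citation rather than reproved.
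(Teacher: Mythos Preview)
The paper does not prove this lemma; it is invoked purely by citation to Stein, as you yourself observe in your final sentence. So there is nothing in the paper to compare against, and the question is only whether your outline stands on its own.

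Your atomic argument for the $H^1 \to L^1$ bound is correct and standard (modulo the evident typo in the displayed H\"ormander condition, where $dy$ should read $dx$ since $y$ is the fixed parameter). The obstacle you flag in the molecular upgrade, however, is genuine and not merely a technicality: the bare integral H\"ormander condition does not by itself yield the weighted $L^2$ decay that a Taibleson--Weiss molecule requires, and the ``annular refinement'' you allude to is not a consequence of the stated hypothesis alone. The clean route to $H^1 \to H^1$ bypasses molecules entirely. On $\R$ one has the characterization $\|h\|_{H^1} \approx \|h\|_{L^1} + \|\mathcal{H}h\|_{L^1}$ with $\mathcal{H}$ the Hilbert transform; since the convolution operator $T$ commutes with $\mathcal{H}$,
\[
\|Tf\|_{H^1} \approx \|Tf\|_{L^1} + \|T(\mathcal{H}f)\|_{L^1} \lesssim \|f\|_{H^1} + \|\mathcal{H}f\|_{H^1} \lesssim \|f\|_{H^1},
\]
using only the $H^1 \to L^1$ bound already established together with the boundedness of $\mathcal{H}$ on $H^1$.
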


\begin{lemma}\label{lem3}\cite[Chapter 6]{Bracewell1965}
Let $f(x)=\frac{\sin x}{x},x\neq0$. The Fourier transform of $f$ is
\[
h(x):=\widehat{f}(x)
=
\begin{cases}
\pi, &|x|<1;\\
\frac{1}{2}\pi, &|x|=1;\\
0, &|x|>1.
\end{cases}
\]
\end{lemma}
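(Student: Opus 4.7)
The plan is to exhibit $F_\epsilon$ as a convolution $F_\epsilon=f*K_\epsilon$ and apply Lemma \ref{lem2} with both constants $A_1,A_2$ independent of $\epsilon$. After the change of variable $u=\epsilon s$ in (\ref{eq11}) and Fubini I identify the kernel
\[
K_\epsilon(u)=\frac{1}{\pi}\int_\R \varphi(t)\,\frac{\sin(|a(t)|u/\epsilon)}{u}\,dt,
\]
which satisfies the dilation identity $K_\epsilon(u)=\epsilon^{-1}K_1(u/\epsilon)$. A change of variable in the $x$-integral of (\ref{eq7}) together with the Fourier-scaling rule then reduce both hypotheses of Lemma \ref{lem2} for $K_\epsilon$ to the same statements for $K_1$, with constants independent of $\epsilon$. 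This is how the $\epsilon$-uniformity will be obtained.

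For the Fourier bound I would actually compute $\widehat{K_1}$ using Lemma \ref{lem3} together with the dilation rule $\widehat{g(\lambda\cdot)}(\xi)=|\lambda|^{-1}\widehat{g}(\xi/\lambda)$, which gives for every $b>0$ the explicit identity
\[
\left(\frac{\sin(b\,\cdot)}{\pi\,\cdot}\right)^{\wedge}(\xi)=\chi_{(-b,b)}(\xi).
\]
Interchanging integrals (justified by $\varphi\in L^1$) then yields the explicit multiplier
\[
\widehat{K_1}(\xi)=\int_{\{t:\,|a(t)|>|\xi|\}}\varphi(t)\,dt,\qquad |\widehat{K_1}(\xi)|\le\|\varphi\|_{L^1},
\]
so that $A_1=\|\varphi\|_{L^1}$ may be taken. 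This is the place where Lemma \ref{lem3} plays its substantive role: it identifies the symbol as an average of indicators of symmetric intervals against the absolutely integrable weight $\varphi$.

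The main obstacle is the Hörmander-type condition (\ref{eq7}). Writing $K_1=\int\varphi(t)D_{|a(t)|}\,dt$ with the Dirichlet-type kernel $D_b(x):=\sin(bx)/(\pi x)$, passing the absolute value inside the $t$-integral, and using the scaling identity
\[
\int_{|x|\ge 2|y|}\bigl|D_b(x-y)-D_b(x)\bigr|\,dx=\int_{|x|\ge 2|by|}\bigl|D_1(x-by)-D_1(x)\bigr|\,dx
\]
(via $x\mapsto x/b$), the problem reduces to the $b$-independent inequality
\[
J:=\sup_{w\ne 0}\int_{|x|\ge 2|w|}\bigl|D_1(x-w)-D_1(x)\bigr|\,dx<\infty,
\]
after which Fubini produces $A_2\le J\,\|\varphi\|_{L^1}$. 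Establishing $J<\infty$ is the crux because $D_1\notin L^1(\R)$, so the naive triangle inequality fails and the oscillation of $\sin x$ must be exploited. I plan to split the $x$-integral at $|x|\sim\max(1,|w|)$: on the bounded piece, Cauchy--Schwarz together with Plancherel (using $\|D_1\|_{L^2}^2=1/\pi$ and $|e^{-iw\xi}-1|\le\min(|w\xi|,2)$) gives an $O(\min(|w|,1))=O(1)$ contribution; on the tail, a first-order Taylor expansion $D_1(x-w)-D_1(x)\approx -w D_1'(x)$ followed by integration by parts (trading the non-integrable $\cos(x)/x$ for the absolutely integrable $\cos(x)/x^2$) furnishes the remaining $O(1)$ contribution. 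Once $J<\infty$ is secured, Lemma \ref{lem2} delivers $\|F_\epsilon\|_{H^1}\le C(A_1,A_2)\|f\|_{H^1}$ with $C$ independent of $\epsilon$.
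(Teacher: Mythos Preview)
First, a mismatch: the stated result is Lemma~\ref{lem3}, a textbook identity quoted from \cite{Bracewell1965} without proof in the paper; your proposal is instead a sketch of Theorem~\ref{th2}. Treating it as such and comparing with the paper's argument: both of you write $F_\epsilon=K_\epsilon*f$, obtain the same Fourier bound $A_1=\|\varphi\|_{L^1}$ via Lemma~\ref{lem3}, and aim to invoke Lemma~\ref{lem2}. The paper verifies the integral condition~(\ref{eq7}) directly for $K_\epsilon$ by the crude estimate $|K_\epsilon(x-y)-K_\epsilon(x)|\le \frac{\|\varphi\|_{L^1}}{\pi}\bigl(\tfrac{1}{|x-y|}+\tfrac{1}{|x|}\bigr)$ (using only $|\sin|\le1$) and then integrates over the bounded variable. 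You instead reduce by dilation to $K_1$, then pass the absolute value inside the $t$-integral to reduce further to the single Dirichlet kernel $D_1(x)=\sin x/(\pi x)$, and claim the uniform H\"ormander bound
\[
J=\sup_{w\ne0}\int_{|x|\ge2|w|}\bigl|D_1(x-w)-D_1(x)\bigr|\,dx<\infty.
\]

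This is where your approach breaks: in fact $J=\infty$. For fixed $0<|w|<1$ and large $|x|$ one has $D_1(x-w)-D_1(x)=-\frac{w\cos x}{\pi x}+O(|x|^{-2})$, and $\int_{|x|>1}\tfrac{|\cos x|}{|x|}\,dx$ diverges; the proposed ``integration by parts'' cannot be carried out once the absolute value has been taken. Equivalently, were $J$ finite then $D_1(\cdot-w)-D_1\in L^1(\R)$ and its Fourier transform $(e^{-iw\xi}-1)\chi_{[-1,1]}(\xi)$ would have to be continuous, which fails at $\xi=\pm1$ for generic $w$. The deeper point is that the sharp cutoff $\chi_{[-b,b]}$ is not an $H^1(\R)$ multiplier, so the individual kernels $D_{|a(t)|}$ cannot satisfy a uniform H\"ormander bound; by moving the absolute value inside the $t$-integral you discard precisely the smoothing---the $t$-average that replaces the family $\{\chi_{[-|a(t)|,|a(t)|]}\}$ by the single multiplier $\widehat{K_1}(\xi)=\int_{\{|a(t)|>|\xi|\}}\varphi(t)\,dt$---on which any valid proof must rely. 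If the H\"ormander condition is to be used, it must be verified for $K_1$ itself, without this reduction.
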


\begin{proof}[Proof of Theorem \ref{th2}]
 Let $f\in L^2(\R)$. By substituting $\epsilon s=\tilde{s}$ and Fubini's theorem, we have
\[
F_\epsilon(x)=\frac{1}{\pi}\int_{\R} f(x-s) \int_{\R} \varphi(t) \frac{\sin(|a(t)|\epsilon^{-1}s)}{s}dtds.
\]
Denote
\[
K_\epsilon (s):=\frac{1}{\pi}\int_{\R} \varphi(t) \frac{\sin(|a(t)|\epsilon^{-1}s)}{s} dt,\quad K(s):=\frac{1}{\pi}\int_{\R} \varphi(t) \frac{\sin(|a(t)|s)}{s} dt.
\]
Then we get
\begin{equation}\label{eq16}
F_\epsilon (x)=K_\epsilon*f(x).
\end{equation}

Now let us prove that $K_\epsilon\in L^2(\R)$. By Minkowskis' inequality, substituting $|a(t)|\epsilon^{-1}s=\tilde{s}$, $\frac{\sin x}{x}$ is even function on $\R$, $|\sin x|\leq 1$, $|\sin x|\leq |x|$  and $(\ref{eq1103})$, we have
\begin{align}
\|K_\epsilon\|_{L^2}
&\leq
\frac{1}{\pi}\int_{\R}|\varphi(t)|\left(\int_{\R}\left|\frac{\sin(|a(t)|\epsilon^{-1}s)}{s}\right|^2ds\right)^\frac{1}{2}dt\nonumber\\
&=\frac{\epsilon^{-\frac{1}{2}}}{\pi}\int_{\R}|\varphi(t)||a(t)|^\frac{1}{2}dt\left(\int_{\R}\left|\frac{\sin s}{s}\right|^2ds\right)^\frac{1}{2}\nonumber\\
&= \frac{\epsilon^{-\frac{1}{2}}}{\pi}\int_{\R}|\varphi(t)||a(t)|^\frac{1}{2}dt\left(2\int_{0}^1\left|\frac{\sin s}{s}\right|^2ds+2\int_1^\infty\left|\frac{\sin s}{s}\right|^2ds\right)^\frac{1}{2}\nonumber\\
&\leq \frac{\epsilon^{-\frac{1}{2}}}{\pi}\int_{\R}|\varphi(t)||a(t)|^\frac{1}{2}dt\left(2\int_{0}^11ds+2\int_1^\infty\frac{1}{s^2}ds\right)^\frac{1}{2}\nonumber\\
&<\infty.
\end{align}
By the properties of the Fourier transform and substituting $\epsilon^{-1}y=\tilde{y}$ , we obtain
\begin{align}
\widehat{F_\epsilon}(x)
&=\widehat{K_\epsilon}(x)\widehat{f}(x)\nonumber\\
&=\widehat{f}(x) \int_{\R} \frac{1}{\pi} \int_{\R} \varphi(t) \frac{\sin(|a(t)|\epsilon^{-1}y)}{y} dt e^{-ixy}dy\nonumber\\
&=\widehat{f}(x) \int_{\R} \frac{1}{\pi} \int_{\R} \varphi(t) \frac{\sin(|a(t)|y)}{y} dt e^{-ix\epsilon y}dy\nonumber\label{eq12011}\\
&=\widehat{K}(\epsilon x)\widehat{f}(x).
\end{align}
By substituting $|a(t)|y=\tilde{y}$, Fubini's theorem and Lemma \ref{lem3}, we have
\begin{align}
\widehat{K}(x)
&=\int_{\R} \frac{1}{\pi} \int_{\R} \varphi(t) \frac{\sin(|a(t)|y)}{y} dt e^{-ixy}dy\nonumber\\
&=\frac{1}{\pi}  \int_{\R}\varphi(t)\int_{\R}\frac{\sin y}{y} e^{-ix|a(t)|^{-1}y}dydt \nonumber\\
&=\frac{1}{\pi} \int_{\R}\varphi(t)\left(\frac{\sin\cdot}{\cdot}\right)\widehat{~}(x|a(t)|^{-1})dt\nonumber 
\\
&=\frac{1}{\pi} \int_{\R} \varphi(t) h(x|a(t)|^{-1}) dt.\label{eq14} 
\end{align}
By $(\ref{eq14})$ and Lemma \ref{lem3}, we obtain
\begin{equation}\label{eq107}
\left|\widehat{K}(x)\right|\leq \|\varphi\|_{L^1(\R)}.
\end{equation}
The same steps as above can be used to obtain 
\begin{equation}\label{eq15}
\left|\widehat{K_\epsilon}(x)\right|\leq \|\varphi\|_{L^1(\R)}.
\end{equation}

For any $y\neq0$ and $|x|\geq 2|y|$, we have
\begin{equation}\label{eq13}
|x-y|\geq |x|-|y|\geq \frac{|x|}{2}.
\end{equation}
 Then by $(\ref{eq13})$, we get
\begin{align}
&\int_{|x|\geq 2|y|} \left|K_\epsilon(x-y)-K_\epsilon(x)\right|dy\nonumber\\
=&
\frac{1}{\pi}\int_{|x|\geq 2|y|} \left| \int_{\R} \varphi(t) \left[\frac{\sin(|a(t)|\epsilon^{-1}(x-y))}{x-y}-\frac{\sin(|a(t)|\epsilon^{-1}x)}{x}\right] dt \right| dy
\nonumber\\
\leq&
\frac{\|\varphi\|_{L^1(\R)}}{\pi}\int_{|x|\geq 2|y|}\frac{1}{|x-y|}dy+\frac{\|\varphi\|_{L^1(\R)}}{\pi}\int_{|x|\geq 2|y|}\frac{1}{|x|}dy
\nonumber\\
\leq&
\frac{\|\varphi\|_{L^1(\R)}}{\pi}\int_{|y|\leq\frac{|x|}{2}}\frac{2}{|x|}dy+\frac{\|\varphi\|_{L^1(\R)}}{\pi}\int_{|y|\leq\frac{|x|}{2}} \frac{1}{|x|}dy
\nonumber\\
=&
\frac{3}{2\pi}\|\varphi\|_{L^1(\R)}.\label{eq17}
\end{align}
By Lemma \ref{lem2} with $(\ref{eq15})$, $(\ref{eq16})$ and $(\ref{eq17})$ we obtain
\begin{equation*}
\|F_\epsilon\|_{H^1(\R)}\leq C \|f\|_{H^1(\R)},~f\in H^1(\R),
\end{equation*}
where $C$ is independent of $\epsilon$.
\end{proof}

\begin{theorem}\label{th1}
Let $\varphi$ and $a$ be as in Theorem \ref{th2}. Additionally, assume that $a$ is odd and such that $|a|$ is decreasing, positive and bijective on $(0,+\infty)$, and that for every set $E$ of zero Lebesgue measure, the set $a^{-1}(E)$ also has zero measure. Then for $\sigma>0$ and any $f\in H^1(\R)$, we have
\begin{equation}\label{eq10}
\|F_\epsilon -f\|_{H^1}\leq C K_{\sigma}(f,\epsilon)_{H^1}\to0~(\epsilon\to0^+),
\end{equation}
where $C>0$ is independent of $\epsilon$.
\end{theorem}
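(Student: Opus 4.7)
The plan is a standard $K$-functional argument. Pick any $g \in H^{1,\sigma}$ and decompose
\[
F_\epsilon f - f = F_\epsilon(f - g) + (F_\epsilon g - g) - (f - g).
\]
Theorem \ref{th2} applied to $f - g$ bounds $\|F_\epsilon(f - g)\|_{H^1}$ by $C\|f - g\|_{H^1}$, and the third summand contributes $\|f - g\|_{H^1}$. If I can establish the key estimate
\[
\|F_\epsilon g - g\|_{H^1} \leq C \epsilon^\sigma \|I^\sigma g\|_{H^1}
\]
for every $g \in H^{1,\sigma}$ with $C$ independent of both $\epsilon$ and $g$, then taking the infimum over $g$ in Definition \ref{def2} yields $\|F_\epsilon f - f\|_{H^1} \leq (C + 1) K_\sigma(f, \epsilon)_{H^1}$, after which Proposition \ref{pro1} delivers the convergence to zero.

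To prove the key estimate, I would work on the Fourier side. From (\ref{eq14}) combined with $\int_\R \varphi \, dt = 1$ and $h(0) = \pi$ (Lemma \ref{lem3}), we read off $\widehat{K}(0) = 1$. Setting
\[
m(x) := \frac{\widehat{K}(x) - 1}{|x|^\sigma}, \quad x \neq 0,
\]
and using $\widehat{F_\epsilon g}(x) = \widehat{K}(\epsilon x) \widehat{g}(x)$ from the proof of Theorem \ref{th2}, one obtains
\[
\widehat{F_\epsilon g - g}(x) = \bigl(\widehat{K}(\epsilon x) - 1\bigr) \widehat{g}(x) = \epsilon^\sigma\, m(\epsilon x)\, \widehat{I^\sigma g}(x).
\]
Hence if $m$ is a homogeneous $H^1$ multiplier in the sense of Definition \ref{def3}, the key estimate follows immediately with $C$ equal to the uniform operator-norm bound of the dilation family $\{M_\epsilon\}$.

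The main obstacle is thus verifying that $m$ is a homogeneous $H^1$ multiplier. I would apply Lemma \ref{lem2} to the convolution kernel $G_\epsilon$ whose Fourier transform is $m(\epsilon \cdot)$. Using Lemma \ref{lem3} together with the monotone bijective form of $|a|$ on $(0, +\infty)$, one can rewrite (\ref{eq14}) as the tail integral
\[
\widehat{K}(x) - 1 = -\int_{\{t\,:\,|a(t)| \leq |x|\}} \varphi(t) \, dt,
\]
from which the pointwise bound $|m(x)| \leq \int_\R |\varphi(t)| / |a(t)|^\sigma \, dt$ follows, since $|a(t)| \leq |x|$ on the region of integration (provided suitable integrability of $\varphi$ against $|a|^{-\sigma}$, which the paper presumably secures through the hypotheses on $\varphi$ and $a$). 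The more delicate step is verifying the H\"ormander-type cancellation (\ref{eq7}) for $G_\epsilon$ uniformly in $\epsilon$; I expect this to proceed analogously to the estimate (\ref{eq17}) in the proof of Theorem \ref{th2}, by splitting $G_\epsilon(x-y) - G_\epsilon(x)$ according to the region $|x| \geq 2|y|$ and exploiting the explicit oscillatory form of $\widehat{K}$.
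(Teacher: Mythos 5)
Your overall architecture is sound, but it is essentially a re-derivation of the tool the paper simply cites: the decomposition $F_\epsilon f-f=F_\epsilon(f-g)+(F_\epsilon g-g)-(f-g)$ together with the Jackson-type estimate $\|F_\epsilon g-g\|_{H^1}\le C\epsilon^\sigma\|I^\sigma g\|_{H^1}$ is exactly the content (and the standard proof) of Lemma \ref{lem1}, which the paper applies as a black box to the multiplier $\widehat K$ after checking its two \emph{local} conditions near the origin. Choosing to reprove this is legitimate, but you have relocated the entire difficulty into the unproved claim that $m(x)=(\widehat K(x)-1)/|x|^\sigma$ is a homogeneous $H^1$ multiplier, and that is precisely where the proposal has a genuine gap, in two respects.

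First, your pointwise bound on $m$ needs $\varphi|a|^{-\sigma}\in L^1(\R)$, and the paper does \emph{not} assume this; the hypotheses are only $\varphi\in L^1$, $\int_\R\varphi=1$ and $\varphi|a|^{1/2}\in L^1$. This is not a removable technicality: in the paper's own Example \ref{ex1} ($a(t)=1/t$, $\varphi(t)=\frac{p-1}{2|t|^p}\chi_{(1,+\infty)}(|t|)$) your tail formula gives $\widehat K(x)-1=-|x|^{p-1}$ for $0<|x|\le 1$, so $|m(x)|=|x|^{p-1-\sigma}$ is unbounded near the origin whenever $\sigma>p-1$, and the key estimate $\|F_\epsilon g-g\|_{H^1}\le C\epsilon^\sigma\|I^\sigma g\|_{H^1}$ cannot hold for all $\sigma>0$ under the stated hypotheses. (The same computation shows that condition (1) of Lemma \ref{lem1} fails for $\sigma>p-1$; the paper's verification of that condition rests on the assertion $[\widehat K(x)]'\equiv 0$ and hence $\widehat K\equiv 1$ in \eqref{eq11281}--\eqref{eq19}, which this example contradicts, so the obstruction you are running into is real and not an artifact of your route.) Second, even where $m$ is bounded, ``I expect this to proceed analogously to \eqref{eq17}'' is not an argument: the kernel $G$ with $\widehat G=m$ arises from $K$ minus a Dirac mass by a fractional integration of order $\sigma$, not from the explicit sine kernel that makes \eqref{eq17} a two-line estimate, and verifying the H\"ormander condition \eqref{eq7} for $G$, uniformly in the dilation parameter, is the whole substance of the multiplier claim. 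As written, the proposal carries out the easy reductions but not the estimate on which the theorem actually turns.
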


To prove Theorem \ref{th1}, we need a lemma as follows. 

\begin{lemma}\label{lem1}\cite[Page 179]{Lu1995}
Suppose that $\sigma>0$ and $m(x)$ is a homogeneous $H^1(\R)$ multiplier, and the family of operators $\{ M_\epsilon \}_{\epsilon>0}$ is defined by $(\ref{eq4})$. If there exists a $d>0$ such that
\begin{enumerate}
\item[(1)] $|m(x)-1|\leq C |x|^\sigma$ for $|x|\leq d$;
\item[(2)] For each $0<R<d$, we have
\begin{equation}\label{eq5}
\int_{R/2<|x|<R} \left| m'(x) \right|^2 dx \leq C R^{2\sigma-1},
\end{equation}
\end{enumerate}
then 
\begin{equation}\label{eq6}
\|M_\epsilon f-f\|_{H^1}\leq C K_\sigma (f,\epsilon)_{H^1},~ f\in H^1(\R),
\end{equation}
where $C>0$ is independent of $R$.
\end{lemma}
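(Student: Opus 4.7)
The plan is to apply Lemma \ref{lem1} to the family $\{F_\epsilon\}_{\epsilon > 0}$ with multiplier $m(x) := \widehat{K}(x)$. The proof of Theorem \ref{th2} already establishes the two ingredients needed to invoke Lemma \ref{lem1}: first, for $f \in H^1 \cap L^2(\R)$, the identity $\widehat{F_\epsilon}(x) = \widehat{K}(\epsilon x)\widehat{f}(x)$ shows that $F_\epsilon$ is precisely the multiplier operator $M_\epsilon$ from \eqref{eq4} with $m = \widehat{K}$; second, the uniform $H^1$ boundedness proved in Theorem \ref{th2} asserts that $\widehat{K}$ is a homogeneous $H^1$ multiplier in the sense of Definition \ref{def3}. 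Therefore the entire task reduces to verifying hypotheses (1) and (2) of Lemma \ref{lem1} for $m = \widehat{K}$ with some $\sigma > 0$.

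To set up both verifications I will exploit the explicit formula obtained in the proof of Theorem \ref{th2}: combining \eqref{eq14} with Lemma \ref{lem3} (and noting that the set $\{t : |a(t)| = |x|\}$ has Lebesgue measure zero by the hypothesis that $a^{-1}$ preserves null sets) yields
\[
\widehat{K}(x) = \int_{\R} \varphi(t)\,\mathbf{1}_{\{|a(t)| > |x|\}}\,dt.
\]
Since $|a|$ is a decreasing bijection on $(0,+\infty)$ and $a$ is odd, the set $\{t : |a(t)| > |x|\}$ coincides with $\{t : |t| < b(|x|)\}$, where $b := (|a|_{(0,\infty)})^{-1}$. Using the normalization $\int_\R \varphi = 1$, I obtain the clean identity
\[
\widehat{K}(x) - 1 = -\int_{|t|\geq b(|x|)} \varphi(t)\,dt.
\]

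For condition (1), I must show $|\widehat{K}(x)-1| \le C|x|^\sigma$ for $|x|\le d$. Since $b(|x|) \to +\infty$ as $|x|\to 0^+$ (because $|a|$ is bijective onto $(0,\infty)$ and decreasing), I will estimate the tail integral above by inserting the weight $|a(t)|^{1/2}$: writing $|\varphi(t)| = |\varphi(t)||a(t)|^{1/2} \cdot |a(t)|^{-1/2}$ and using $|a(t)| \le |x|$ on the region of integration, I will balance the exponents so as to exploit the hypothesis $\varphi|a|^{1/2}\in L^1(\R)$ together with the monotonicity of $|a|$, yielding the desired $|x|^\sigma$ rate. For condition (2), since $\widehat{K}(x)$ depends only on $|x|$, the chain rule gives $\widehat{K}'(x) = \operatorname{sgn}(x)\, b'(|x|) [\varphi(b(|x|))+\varphi(-b(|x|))]$ for $x\neq 0$; squaring and substituting $u = b(|x|)$ (which is legitimate thanks to the bijectivity of $|a|$) converts $\int_{R/2<|x|<R}|\widehat{K}'(x)|^2 dx$ into an integral of $|\varphi(u)+\varphi(-u)|^2 / |(|a|)'(u)|$ over a bounded $u$-interval, from which the required $R^{2\sigma-1}$ bound will follow.

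Once (1) and (2) are verified, Lemma \ref{lem1} yields $\|F_\epsilon - f\|_{H^1} \le C K_\sigma(f,\epsilon)_{H^1}$, and Proposition \ref{pro1} provides the limit $K_\sigma(f,\epsilon)_{H^1}\to 0$ as $\epsilon\to 0^+$, completing the proof. The chief obstacle is the \emph{quantitative} verification of (1) and (2): the only hypothesis tying $\varphi$ to $a$ is the $L^1$ condition $\varphi|a|^{1/2}\in L^1(\R)$, so the right choice of $\sigma$ (and the constant $C$) must be engineered by carefully balancing the weight $|a|^{1/2}$ against the size of $b(|x|)$ inside the tail integral, and dually inside the Jacobian $b'(|x|)$ after the change of variables in (2); the four concrete examples in Section \ref{s4} should confirm that this balancing is indeed achievable for natural choices of $\varphi$ and $a$.
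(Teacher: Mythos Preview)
Your proposal is not a proof of the stated lemma. Lemma~\ref{lem1} is quoted from \cite[p.~179]{Lu1995} and carries no proof in the paper; it is a general multiplier criterion for approximation in $H^1(\R)$. What you have outlined is instead a proof of Theorem~\ref{th1}, which \emph{applies} Lemma~\ref{lem1} to the particular multiplier $m=\widehat{K}$. Nothing in your write-up addresses why hypotheses (1) and (2) on an abstract $m$ force the $K$-functional bound $\|M_\epsilon f-f\|_{H^1}\le C\,K_\sigma(f,\epsilon)_{H^1}$; you simply invoke the lemma as a black box.

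If your intended target was Theorem~\ref{th1}, then your route differs from the paper's. The paper argues, by differentiating \eqref{eq14} under the integral sign and using $h'=0$ a.e., that $[\widehat{K}(x)]'\equiv 0$, whence $\widehat{K}\equiv\widehat{K}(0)=1$ and hypotheses (1)--(2) of Lemma~\ref{lem1} hold trivially with $|\widehat{K}(x)-1|=0$. You instead derive $\widehat{K}(x)-1=-\int_{|t|\ge b(|x|)}\varphi(t)\,dt$ and seek a genuine rate $|x|^\sigma$. Your formula is correct---in Example~\ref{ex1} it yields $\widehat{K}(x)=1-|x|^{p-1}$ for $|x|<1$, visibly non-constant---but the quantitative step you defer does not follow from the stated hypotheses: on $\{|t|\ge b(|x|)\}$ one has $|a(t)|\le |x|$, so the available weight $|a|^{1/2}$ is \emph{small} there and cannot be inserted to dominate $|\varphi|$, while with only $\varphi\in L^1$ the tail $\int_{|t|\ge R}|\varphi|$ need not decay like any positive power. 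Your plan for condition~(2) likewise introduces $1/|(|a|)'|$, a quantity on which nothing is assumed.
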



\begin{proof}[Proof of Theorem \ref{th1}]
By $(\ref{eq12011})$, $(\ref{eq107})$ and Theorem \ref{th2}, we obtain that $\widehat{K}$ is a homogeneous $H^1$ multiplier. By Lemma \ref{lem3} and $(\ref{eq14})$, we get
\begin{equation}\label{eq18}
\widehat{K}(0)=\int_{\R} \varphi(t) dt=1.
\end{equation}
By Lebesgue's dominated convergence theorem with $h'(x)=0$, a.e. $x\in\R$ and $a^{-1}(E)$ has zero Lebesgue measure for any zero measure set $E\subset\R$, we obtain
\begin{equation}\label{eq11281}
[\widehat{K}(x)]'=0.
\end{equation}
From this and Lagrange's mean value theorem, it follows that
\begin{equation}\left|\widehat{K}(x)-1\right|
=
\left|\widehat{K}(x)-\widehat{K}(0)\right|
=0
\leq
|x|^\sigma.\label{eq19}
\end{equation}
Therefore, by Lemma \ref{lem1} with $(\ref{eq19})$ and $(\ref{eq11281})$, we obtain
\[
\|F_\epsilon -f\|_{H^1}\leq C K_{\sigma}(f,\epsilon)_{H^1},~f\in H^1.
\]
Consequently, by Proposition \ref{pro1}, we have
\[
\|F_\epsilon -f\|_{H^1}\leq C K_{\sigma}(f,\epsilon)_{H^1}\to0~(\epsilon\to0^+), ~f\in H^1.
\]
\end{proof}

\section{Examples \label{s4}}
In the results concerning partial Hausdorff integrals, there is flexibility in the choice of \(\varphi\). This section will select some types of typical specific functions for \(\varphi\).\begin{example}\label{ex1}
Let $p>1$, $\sigma>0$, $\epsilon>0$ and $f\in L^2(\R)$. When $a(t)=\frac{1}{t}$ and $\varphi(t)=\frac{p-1}{2|t|^p}\chi_{(1,+\infty)}(|t|)$, the general Hausdorff operator $H_{\varphi,a}$ reduces to
\[
H_{\varphi,a}(f)(x)=\frac{p-1}{2}\int_{|t|>1}\frac{1}{|t|^{p+1}}f\left(\frac{x}{t}\right) dt,~x\in\R.
\]
The corresponding partial Hausdorff integrals reduces to
\[
F_\epsilon=\frac{p-1}{2\pi}\int_{|t|>1}\frac{1}{|t|^p}\int_{\R}f(x-\epsilon s)\frac{\sin\left(\frac{s}{|t|}\right)}{s}dsdt,~x\in\R.
\]
Obviously, $\varphi(t)$ and $a(t)$ satisfy the assumptions of Theorem \ref{th2} and Theorem \ref{th1}, thus for $f\in H^1(\R)$
\[
\|F_\epsilon\|\leq C_1\|f\|_{H^1}~\text{and}~\|F_\epsilon-f\|_{H^1}\leq C_2K_\sigma(f,\epsilon)\to0,~(\epsilon\to0^+),
\] 
where $C_1$, $C_2>0$ are independent of $\epsilon$.
\end{example}

\begin{example}\label{ex2}
Let $p<\frac{1}{2}$, $\sigma>0$, $\epsilon>0$ and $f\in L^2(\R)$. When $a(t)=\frac{1}{t}$ and $\varphi(t)=\frac{1-p}{2|t|^p}\chi_{(0,1)}(|t|)$, the general Hausdorff operator $H_{\varphi,a}$ reduces to
\[
H_{\varphi,a}(f)(x)=\frac{1-p}{2}\int_{|t|<1}\frac{1}{|t|^{p+1}}f\left(\frac{x}{t}\right)dt,~x\in\R.
\]
The corresponding partial Hausdorff integrals reduces to
\[
F_\epsilon=\frac{1-p}{2\pi}\int_{|t|<1}\frac{1}{|t|^p}\int_{\R}f(x-\epsilon s)\frac{\sin\left(\frac{s}{|t|}\right)}{s}dsdt,~x\in\R.
\]
Obviously, $\varphi(t)$ and $a(t)$ satisfy the assumptions of Theorem \ref{th2} and Theorem \ref{th1}, thus for $f\in H^1(\R)$
\[
\|F_\epsilon\|_{H^1}\leq C_1\|f\|_{H^1}~\text{and}~\|F_\epsilon-f\|_{H^1}\leq C_2K_\sigma(f,\epsilon)\to0,~(\epsilon\to0^+),
\] 
where $C_1$, $C_2>0$ are independent of $\epsilon$.
\end{example}

\begin{example}
In Example \ref{ex2}, if we pick \(p = 0\), the operator \(H_{\varphi,a}\) reduces to the adjoint Hardy operator \(H^*\),
\[
H^*(f)(x):=\frac{1}{2}\int_{|t|>|x|}\frac{f(t)}{|t|}dt,~x\in\R.
\]
\end{example}

\begin{example}\label{ex3}
Let $\alpha>0$, $\sigma>0$, $\epsilon>0$ and $f\in L^2(\R)$. When $a(t)=\frac{1}{t}$ and $\varphi(t)=\frac{1}{2}(1+\alpha)(1-|t|)^\alpha\chi_{(0,1)}(|t|)$, the general Hausdorff operator $H_{\varphi,a}$ reduces to Riemann-Liouville type integral, i.e.
\[
H_{\varphi,a}(f)(x)=\frac{1+\alpha}{2}\int_{|t|<1}\frac{(1-|t|)^\alpha}{|t|}f\left(\frac{x}{t}\right)dt,~x\in\R.
\]
The corresponding partial Hausdorff integrals reduces to
\[
F_\epsilon=\frac{1+\alpha}{2\pi}\int_{|t|<1}(1-|t|)^\alpha \int_{\R}f(x-\epsilon s)\frac{\sin\left(\frac{s}{|t|}\right)}{s}dsdt,~x\in\R.
\]
Obviously, $\varphi(t)$ and $a(t)$ satisfy the assumptions of Theorem \ref{th2} and Theorem \ref{th1}, thus for $f\in H^1(\R)$
\[
\|F_\epsilon\|_{H^1}\leq C_1\|f\|_{H^1}~\text{and}~\|F_\epsilon-f\|_{H^1}\leq C_2K_\sigma(f,\epsilon)\to0,~(\epsilon\to0^+),
\] 
where $C_1$, $C_2>0$ are independent of $\epsilon$.
\end{example}

\noindent{\bf Funding } This project is supported by National Natural Science Foundation of China (No.12261083) and Natural Science Foundation of Xinjiang Province (No.2024D01C40).

\noindent{\bf Data Availability} No datasets were generated or analysed during the current study.

\section*{Declarations}

\noindent{\bf Ethical Approval} Not applicable.

\noindent{\bf Competing interests} The authors declare no competing interests.

\end{document}